\theoremstyle{plain}
\newtheorem{prop}{Proposition}
\newtheorem{thm}{Theorem}
\newtheorem{cor}{Corollary}%[section]
\theoremstyle{definition}
\newtheorem{example}{Example}
\newtheorem{defn}{Definition}
\theoremstyle{remark}
\newtheorem{rem}{Remark}
\newenvironment{pf}{\proof}{\endproof}
\newcounter{cnt}
\def\mydggeometry{\makeatletter\dg@YGRID=1\dg@XGRID=20\unitlength=0.003pt\makeatother}
\makeatother \theoremstyle{remark}
\numberwithin{equation}{section}
\def\section{\def\@secnumfont{\mdseries}\@startsection{section}{1}%
  \z@{.7\linespacing\@plus\linespacing}{.5\linespacing}%
  {\normalfont\scshape\centering}}
\def\subsection{\def\@secnumfont{\bfseries}\@startsection{subsection}{2}%
  {\parindent}{.5\linespacing\@plus.7\linespacing}{-.5em}%
  {\normalfont\bfseries}}
\begin{document}

\title[]{On the characterization of chordal graphs using Horn hypergeometric series}
\author{Dipnit Biswas}\address{Department of Mathematics, Indian Institute of Science, Bangalore 560012, India}
\email{dipnitbiswas@iisc.ac.in}
\thanks{}
\author{Irfan Habib}\address{Department of Mathematics, Indian Institute of Science, Bangalore 560012, India}
\email{irfanhabib@iisc.ac.in}
\thanks{}

\author{R. Venkatesh}
\address{Department of Mathematics, Indian Institute of Science, Bangalore 560012, India}
\address{Visiting Faculty, Department of Mathematics, Indian Institute of Technology Madras, Chennai 600036, India}
\email{rvenkat@iisc.ac.in}
%\thanks{R.V. was partially supported by the Infosys Young Investigator Award grant.}

\subjclass[2010]{}
\begin{abstract}
Radchenko and Villegas characterized the chordal graphs by the inverse of their independence polynomials being Horn hypergeometric series in Radchenko et al. in 2021.
In this paper, we reprove their result using some elementary combinatorial methods. Our proof is different from their proof, and it is based on the connection between the inverse of the multi-variate independence polynomials and the multi-colored chromatic polynomials of graphs, established by Arunkumar et al. in 2018.
\end{abstract}

\maketitle
%%%%%%%%%%%%%%%%%%%%%%%%%%%%%%%%%%%%%%%%%%%%%%%%%%%%%%%%%%%%%%%%%%%%%%%%%%%%%%%%%%%%%%%%%%%%%%%%%%%%%%%%%%%%%%%%%%%%%%%%%%%%%%%%%%%%%%%%

%%%%%%%%%%%%%%%%%%%%%%%%%%%%%%%%%%%%%%%%%%%%%%%%%%%%%%%%%%%%%%%%%%%%%%%%%%%%%%%%%%%%%%%%%%%%%%%%%%%%%%%%%%%%%%%%%%%%%%%%%%%%%%%%%%%%%%%%
\section{Introduction}
The main motivation of this paper is to reprove a result of Radchenko and Villegas, which characterizes the chordal graphs by the inverse of their independence polynomials being Horn hypergeometric series, see \cite[Theorem 2.2]{RadchenkoVilleas2021}, using some elementary ideas coming from graph colorings. First, we state their result and then explain our strategy to approach their result. 
We consider only \textit{finite simple graphs} in this paper, which means our graphs have only finitely many vertices and edges and no multiple edges and loops. 
Let $\mathcal{G}$ be a simple graph with a vertex set $I = \{1, \ldots, n\}$. A subset of $I$ is said to be stable or independent if the subgraph spanned by that subset has no edges. 
We work over the field of rational numbers $\mathbb{Q}$ throughout this article and let $R = \mathbb{Q}[[x_i : i\in I]]$ be the ring of formal power series generated by the commuting variables $(x_i : i\in I)$ over $\mathbb{Q}$.
%{\color{red} Maybe: Let $\mathbb{Q}$ be a field of characteristic $0$ and let $R = \mathbb{Q}[[x_i : i\in I]]$ be the ring of formal power series generated by the commuting variables $x_i, i\in I$ over $\mathbb{Q}$.}
The \textit{multi-variate independence polynomial} of $\mathcal{G}$ over $\mathbb{Q}$ is defined to be the polynomial 
$$ I(\mathcal{G}, \mathbf x) = \sum\limits_{S\in \mathcal{I}(\mathcal{G})} \left(\prod_{i\in S} x_i\right) \in R,$$
where  $\mathcal{I}(\mathcal{G})$ is the set of all 
\textit{independent subsets} of $\mathcal G$. Note that $\emptyset$ and $\{i\}\in \mathcal{I}(\mathcal{G})$ for all $i\in I.$
Since the power series $I(\mathcal{G}, \mathbf x)$ has constant term $1$, we have $I(\mathcal{G}, \mathbf x)^{-q}\in R$ for any positive integer $q\ge 1.$ 
For a given  $I-$tuple of integers $\mathbf{m} =(m_i : i\in I)$, the support of $\mathbf{m}$ is defined by
$\mathrm{supp}(\mathbf{m}) = \{i\in I : m_i\neq 0\}$, and we say $\mathbf{m}\ge 0$ if $m_i\ge 0$ for all $i\in I.$
A power series $F(\mathbf{x})=\sum_{\mathbf{m} \geq 0} c_{\mathbf{m}}\mathbf{x}^{\mathbf{m}}\in R$ is called $\textit{Horn hypergeometric}$ if
\begin{enumerate}
    \item 
 $c_{\mathbf{m}}\neq 0$ for all $\mathbf{m}\geq 0$  and
 \medskip
\item 
$\frac{c_{\mathbf{m}+\mathbf{e}_i}}{c_{\mathbf{m}}}$
is a rational function of $\{m_j : j\in \mathrm{supp}(\mathbf{m})\}$ for each $i\in \mathrm{supp}(\mathbf{m})$, where $\mathbf{e}_i := (\delta_{ji} : j\in I)$ for $i\in I$,
\end{enumerate}
%The notations $\mathbf{m}\ge 0$ and $|\mathrm{supp}(\mathbf{m})|$ are defined in Section \ref{basicnotations}.

We urge interested
readers to refer to \cite{RadchenkoVilleas2021} for more details and motivation to study the Horn hypergeometric series; we will not discuss it in this paper.
A finite simple graph $\mathcal{G}$ is said to be \textit{chordal} if it has no induced subgraph isomorphic to the cycle graph $C_m$ with $m \geq 4$. 
By an \textit{induced subgraph} defined by a subset $S$ of vertices 
of $\mathcal{G}$, we mean the subgraph  $\mathcal{G}(S)\subseteq \mathcal{G}$ 
obtained by deleting from $\mathcal{G}$ the vertices not in $S$  and all the edges incident to at least one vertex not in $S$. For $n\ge 3$, the cycle graph $C_n$ consists 
of vertices $\{1,\ldots,n\}$ with an edge joining~$i$ with $i+1$, where the indices are read modulo~$n$.
\begin{figure}[ht]
 \begin{subfigure}{0.49\textwidth}

    \centering
    \begin{tikzpicture}
    \tikzstyle{B}=[circle,draw=black!80,fill=black!80,thick]
    %\tikzstyle{W}=[circle,draw=white!60,fill=white!60,thick]
    \node[B] (1) at (0,0) [label=below:$n-1$]{};
    \node[B] (2) at (1.5,0) [label=below:$3$]{};
    \node[B] (3) at (-0.8,1.5) [label=left:$n$]{};
    \node[B] (4) at (2.3,1.5) [label=right:$2$]{};
    \node[B] (5) at (0.75,2.7) [label=right:$1$]{};
    \path[-] (2) edge node[left]{} (4);
    \path[-] (1) edge node[left] {} (3);
    \path[-] (4) edge node[left] {} (5);
    \path[-] (3) edge node[left] {} (5);
    %\draw[dotted] (3) -- (4);
    \path (1) -- node[auto=false]{\ldots} (2);
\end{tikzpicture}
    \caption{Cycle $C_n$}
    \label{C_n}
        \end{subfigure}
    \begin{subfigure}{0.49\textwidth}

\centering
\begin{tikzpicture}
    \tikzstyle{B}=[circle,draw=black!80,fill=black!80,thick]
    \node[B] (1) at (0,0) [label=below:$1$]{};
    \node[B] (2) at (2,0) [label=below:$2$]{};
    \node[B] (3) at (3.5,0) [label=below:$n-1$]{};
    \node[B] (4) at (5.5,0) [label=below:$n$]{};
    \path[-] (1) edge node[left]{} (2);
    \path[-] (3) edge node[left]{} (4);
    \path (2) -- node[auto=false]{\ldots} (3);
\end{tikzpicture}
    \caption{Line graph $L_n$}
    \label{L_n}
    \end{subfigure}
    \caption{}
\end{figure}

\noindent
For example, the line graphs $L_n, n\ge 1$ are chordal, but the cycle graphs $C_n, n\ge 4$ are not chordal. Let us keep
the line graph $L_4$ and the cycle graph $C_4$ as our running examples to explain all our results. One can easily compute the multi-variate independence polynomials of these graphs. 
\begin{align*}
I(L_4, \bold x)&= 1+x_1+x_2+x_3+x_4+x_1x_3+x_2x_4+x_1x_4,\\
I(C_4, \bold x)&= 1+x_1+x_2+x_3+x_4+x_1x_3+x_2x_4.\\
\end{align*}
\noindent
The following characterization of chordal graphs is proved in \cite[Theorem 2.2]{RadchenkoVilleas2021}.
\begin{thm}(\cite[Theorem 2.2]{RadchenkoVilleas2021})\label{mainthmintro}
 Let $\mathcal{G}$ be a finite simple graph. Then, the
following statements are equivalent.
\begin{enumerate}
    \item 
 The graph $\mathcal{G}$ is a chordal graph.
 \item The power series $I(\mathcal{G}, \mathbf x)^{-q}$  is Horn hypergeometric
         for all $q\ge 1$.
       \item The power series $I(\mathcal{G}, \mathbf x)^{-1}$  is Horn hypergeometric. \qed
\end{enumerate} 
\end{thm}

This paper reproves Radchenko and Villegas's result using more elementary combinatorial
methods. We now explain our strategy. Our proof is motivated by the connection between the generalized chromatic polynomials of $\mathcal{G}$ and the $q$--th power of independence polynomial of $\mathcal{G}$ which was established in \cite{ADV2018}. Fix $\mathbf{m} = (m_i : i \in I)$ a tuple of non-negative integers, set $|\mathbf{m}| = \sum_{i\in I} m_i$. Given $q$ colors,
a proper vertex  multi coloring of $\mathcal{G}$ associated to $\mathbf{m}$, using given at most $q$-colors, is an assignment of colors from these given $q$-colors to the vertices of $\mathcal{G}$ in which each vertex $i\in I$
receives exactly $m_i$ colors such that two adjacent vertices receive
 disjoint colors. For a given $q\in \mathbb{N}$, the function that counts the number of distinct
proper vertex multicolorings of $\mathcal{G}$ that can be made using given $q$-colors is a polynomial in $q$, called 
the generalized chromatic polynomial of $\mathcal{G}$  associated to $\mathbf{m}$, it is denoted as $\pi_{\mathcal{G}}^{\mathbf{m}}(q)$ (see Section \ref{multichormaticdef} for precise definition). Then we have (see \cite[Propostion 3.7]{ADV2018}, also \cite[Remark 3.4]{Bernardi})
\begin{thm}\label{genfunintro}
Let $\mathcal{G}$ be a finite simple graph. For $q\in \mathbb{Z}$, we have the following identity holds in $R$
\[ I(\mathcal{G}, \mathbf{x})^{q} = \sum\limits_{\mathbf{m}\ge 0} \pi_{\mathcal{G}}^{\mathbf{m}}(q) \mathbf{x}^{\mathbf{m}}.
\] \qed
\end{thm}

\medskip
Theorem \ref{genfunintro} gives a very explicit way of calculating the $q-$th power of $I(\mathcal{G}, \mathbf{x})$ in terms of generalized chromatic polynomials of $\mathcal{G}$. 
On the other hand, one can efficiently compute these generalized chromatic polynomials for graphs with the perfect elimination ordering on their vertices.
An ordering $\leq $ on  the vertices $I$ of $\mathcal{G}$ is called 
a \textit{perfect elimination ordering} if it is a total-ordering and 
for each $k\in I$ the subgraph $\mathcal{G}_k$ of $\mathcal{G}$ induced by the set of vertices from  
$\{i\in I : i\le k\}$ that are adjacent with $k$ is a clique, i.e., complete subgraph (here, by convention, we assume that $k$ is adjacent to $k$). We call $\mathcal{G}$ a PEO graph if its vertices have a perfect elimination ordering. For example, the usual ordering on the vertices of the Line graph $L_n$ is a perfect elimination ordering. Not every graph has a perfect elimination ordering, for example the cycle graph $C_n$ is a PEO graph if and only if $n=3.$
It is a remarkable fact that a finite simple graph $\mathcal{G}$ is chordal if and only if it is a PEO graph \cite[Chapter 4, Section 1]{Golumbic}. 
It is easy to compute the chromatic polynomials of chordal graphs if we fix a perfect elimination ordering on their vertices. More precisely, we have
\begin{thm}\label{poegraphintro}
Let $\mathcal{G}$ be a chordal graph with the vertex set $I$ and let $I=\{1, \ldots, n\}$ be a perfect elimination ordering. 
Then, for any $\mathbf{m}\ge 0$, we have
 \[
\pi_{\mathcal{G}}^{\mathbf{m}}(-q) =(-1)^{|\mathbf{m}|}\prod\limits_{r=1}^{n} \binom{q-1+a_r(\mathbf{m})}{m_{r}}
\]
where $a_r(\mathbf{m})=m_{r}+\sum\limits_{\substack{1\le s <r\\ s\sim r}} m_{s}$ for $1\le r \le n$ (here $a\sim b$ means the vertices $a$ and $b$
are connected by an edge in $\mathcal{G}$). \qed
\end{thm}
\noindent
For example, we have \[
\pi_{L_4}^{\mathbf{m}}(-q) =(-1)^{|\mathbf{m}|} \binom{q+m_1-1}{m_1} \binom{q+m_1+m_2-1}{m_{2}} \binom{q+m_2+m_3-1)}{m_{3}} \binom{q+m_3+m_4-1}{m_{4}}.
\]

\medskip\noindent
Note that we recover one of the main results of \cite[Corollary 3.2]{RadchenkoVilleas2021} from Theorem \ref{genfunintro} and \ref{poegraphintro}, which was proved in \cite{RadchenkoVilleas2021} by entirely different techniques. 
\begin{cor}\label{cor3.2}
Let $\mathcal{G}$ be a chordal graph with the vertex set $I$ and let $I=\{1, \ldots, n\}$ be a perfect elimination ordering. 
Then, for any $q\ge 1$,  we have
 \[  I(\mathcal{G}, \mathbf{x})^{-q} = \sum\limits_{\mathbf{m}\ge 0} (-1)^{|\mathbf{m}|}\prod\limits_{r=1}^{n} \binom{q-1+a_r(\mathbf{m})}{m_{r}} \mathbf{x}^{\mathbf{m}}.
\]
where $a_r(\mathbf{m})$ defined as above. In particular $I(\mathcal{G}, \mathbf{x})^{-q}$ is a Horn hypergeometric series for all $q\ge 1.$ \qed
\end{cor}
Radchenko and Villegas used ideas coming from the representation theory of quantum affine algebras; they inverted the Nahm system of equations arising from the adjacency matrix of the given graph and, using that, wrote down the formula for the coefficients of the inverse of the multi-variate independence polynomial of chordal graphs and cycle graphs. 
Our method has some advantages as we have that Theorem \ref{genfunintro} holds for any finite simple graph $\mathcal{G}$. 
So, we can compute the generalized chromatic polynomials for even some families of non-chordal graphs; in particular, we could compute this for cycle graphs, which was already done in Read's paper \cite{Read}. It is important to note that the authors computed only  $I(C_n, \mathbf x)^{-1}, n\ge 4$  in \cite{RadchenkoVilleas2021} using their techniques, but
the general formula for $I(C_n, \mathbf x)^{-q}$  for $n\ge 4$ can be easily written down
using our method and Read's result. %We can use this information to prove our main theorem without difficulties, as in \cite{RadchenkoVilleas2021}.

\medskip
Indeed, Read calculated the chromatic polynomials of the clan circuit graphs using mathematical induction and the fundamental recurrence relation of chromatic polynomials called the deletion–contraction recurrence relation. Here the \textit{clan circuit of graph} length $n\ge 1$ associated with an $n-$tuple of non-negative integers $(m_1, \ldots, m_n)$ is defined as follows:
for each vertex $1\le i \le n$ of $C_n$, we take a complete graph of size $m_i$  and 
join all the vertices of $i$th complete graph of size $m_i$ and $j$th complete graph of size $m_j$ if there is an edge between the vertices $i$ and $j$ in $C_n.$ Here 
the complete graph of size $r$ is the graph that has $\{1, \ldots, r\}$ as vertices and any two distinct vertices $1\le i \neq j\le r$ 
have an edge between them.
For example $n = 3$ and $m_1 = 2, m_2 = 3$ and $m_3 =1$, we have $C_3(2, 3, 1)$ as shown in Figure \ref{clangraph}.
\begin{figure}[h]
\centering
\begin{subfigure}{0.49\textwidth}
\centering
        \begin{tikzpicture}
    \node (1) at (-4,0) [circle, fill=black,thick, label=below:1] {};
    \node (2) at (-2,0) [circle, fill=black,thick, label=below:2] {};
    \node (3) at (3,0) [circle, fill=black,thick, label=below:A] {};
    \node (4) at (-1,2) [circle, fill=black,thick, label=left:a] {};
    \node (5) at (1,2) [circle, fill=black,thick, label=right:b] {};
    \node (6) at (0,3) [circle, fill=black,thick, label=above:c] {};
    
    \path[-] (1) edge node[left]{} (2);
    \path[-,bend right] (1) edge node[left]{} (3);
    \path[-] (2) edge node[left]{} (3);
    \path[-] (4) edge node[left]{} (5);
    \path[-] (4) edge node[left]{} (6);
    \path[-] (5) edge node[left]{} (6);
    \path[-,bend left] (1) edge node[left]{} (6);
    \path[-] (1) edge node[left]{} (4);
    \path[-] (1) edge node[left]{} (5);
    \path[-,bend left=120] (2) edge node[left]{} (6);
    \path[-] (2) edge node[left]{} (4);
    \path[-] (2) edge node[left]{} (5);
    \path[-,bend right=90] (3) edge node[left]{} (6);
    \path[-] (3) edge node[left]{} (4);
    \path[-] (3) edge node[left]{} (5);
\end{tikzpicture}
 \caption{The clan circuit graph $C_3(2, 3, 1)$}
    \label{clangraph}
\end{subfigure}
\caption{}
\end{figure}

\medskip
However, we have a close relationship between the chromatic polynomials of the clan circuit graphs and 
generalized chromatic polynomials of cycle graphs; see the section \ref{clanconnection} and Proposition \ref{keyprop} for more details. Using 
Proposition \ref{keyprop} and \cite[Theorem 3]{Read}, we could write down the formula for the generalized chromatic polynomials of cycle graphs.
\begin{thm}(\cite[Theorem 3]{Read})\label{readthm}
    For the cycle graph $C_n$, $n\ge 3$, we have the following formula
$$   \pi_{C_n}^{\mathbf{m}}(q)= \frac{1}{\prod_{i=1}^n m_i!}  \left(\prod_{i=1}^{n}(q)_{(m_i+m_{i+1})}\right)\times
    \left(\sum\limits_{k=0}^n\Big((-1)^{kn}v_k(q)\prod_{i=1}^n\frac{(m_i)_{(k)}}{(q)_{({m_i+k})}}\Big)\right)
$$
where $m_{n+1}=m_1$ 
and $v_k(q)=\binom{q}{k}-\binom{q}{k-1}$ and $(q)_k=q(q-1)(q-2)\cdots(q-k+1)$.  \qed
\end{thm}
\noindent
For example, we have \[
\pi_{C_4}^{\mathbf{m}}(-q) = \frac{1}{\prod_{i=1}^4 m_i!}  \left(\prod_{i=1}^{4}(q)_{(m_i+m_{i+1})}\right)\times
    \left(\sum\limits_{k=0}^4v_k(q)\prod_{i=1}^4\frac{(m_i)_{(k)}}{(q)_{({m_i+k})}}\right).
\]
We can give our proof of Theorem \ref{mainthmintro}
using the stated results. A brief outline of our proof is given below.
\subsection{Outline of the proof}
The implication $(1)$ implies $(2)$ follows from the Corollary \ref{cor3.2} and the detailed proof is given in corollary \ref{keyresultpoegraph} (Section \ref{1implies2}).
The implication $(2)$ implies $(3)$ is obvious, and to prove $(3)$ implies $(1)$, we first observe that it is enough to prove  $I(C_m, \mathbf{x})^{-1}$ is 
Horn hypergeometric if and only if $m\le 3.$ The series 
$I(C_m, \mathbf x)^{-1} = \sum\limits_{\mathbf{m}\ge 0} c_{\mathbf{m}} \bold x^{\mathbf{m}}$ is Horn hypergeometric immediately implies that
 the sum of (signed) diagonal terms $$H_m(t) = \sum\limits_{\mathbf{a}= (a,\ldots, a)} (-1)^{ma} c_{\bold a} t^{a}\in \mathbb{Q}[[t]].$$
 is also a hypergeometric series. Set $S(m, a):= (-1)^{ma} c_{\bold a}$, and we have a closed formula for $S(m, a)$ deduced from Theorem \ref{readthm}; see corollary \ref{keyresultcycle}.  The numbers $S(m, a)$ are known to be the de Bruijn numbers in the literature; see \cite[Page 72, Section 4.7]{Bruijn}. 
The asymptotic behavior of $S(m, a)$ for fixed $m$ and large $a$ has been computed in \cite{Bruijn}:
	$$\frac{S(m, a+1)}{S(m, a)}\rightarrow (2\cos(\pi/2m))^{2m}, \qquad a \rightarrow\infty.$$
Since $H_m(t)$ is Horn
hypergeometric, we must have $(2\cos(\pi/2m))^{2m}\in \mathbb{Q}$, and this implies immediately that $m\le 3$ as in \cite[Proposition 6.3]{RadchenkoVilleas2021}.
More details of the proof can be found in the section \ref{converseproof}.

\section{Characterization of chordal graphs using hypergeometic series}\label{prem}

\subsection{Preliminaries}\label{basicnotations}
We denote by $\mathbb{N}$, $\mathbb{Z}_{\ge 0}$,  $\mathbb{Z}_{\le 0}$, and $\mathbb{Z}$ the set of positive integers, non-negative integers, non-positive integers, and integers, respectively. 
%Let $I$ be a countable indexing set, identify $I = \{1, \ldots, n\}$ or $\mathbb{N}.$
%Let $\mathbb{Q}$ be a field of characteristic $0$ and the ring of formal power series generated by the commuting variables $x_i, i\in I$ over $\mathbb{Q}$ is denoted as $R = \mathbb{Q}[[x_i : i\in I]]$.
%The ring of formal series in one variable $t$ over a field $\mathbb{Q}$ is denoted as $\mathbb{Q}[[t]].$ 
For a set $S$, denote by $|S|$ the cardinality of $S$ and by $P(S)$ the power set of $S.$
%Denote by $\mathbb{Z}_{\ge 0}^I$ the set of $I$--tuple of non-negative integers. For a given $\mathbf{m}: =(m_i : i\in I)\in \mathbb{Z}_{\ge 0}^I$, the support of $\mathbf{m}$ is denoted by
%$\mathrm{supp}(\mathbf{m}) = \{i\in I : m_i\neq 0\}$.
%Given $\mathbf{m}=(m_i : i\in I)\in \mathbb{Z}_{\ge 0}^I$ with finite support, i.e., $|\mathrm{supp}(\mathbf{m})|<\infty$, we define
%$|\mathbf{m}|=\sum_{i\in I}m_i.$ 
For a variable $q$, we define  ${q\choose k} = \frac{q(q-1)\cdots (q-(k-1))}{k!}.$
Note that we have ${-q\choose k} = 
 (-1)^k{q+k-1\choose k}.$

\subsection{}
Let $\mathcal{G}$ be a finite simple graph with a vertex set $I = \{1, \dots, n\}$ and an edge set $E$. %Note that $\mathcal{G}$ has no multiple edges and loops.
For $i, j\in I$, the edge between $i$ and $j$ is denoted as $e(i, j).$ Note that $e(i, j)= e(j, i)$ for all $i, j\in I.$ We now collect the basic definitions. 
\begin{defn}
\begin{enumerate}
    \item The $\textit{induced subgraph}$ defined by a subset $S$ of vertices of $\mathcal{G}$ is denoted as $\mathcal{G}(S)$ and is obtained by deleting from $\mathcal{G}$ the vertices not in $S$  and all the edges incident to at least one vertex not in $S$.
\item 
A subset $S\subseteq I$ is said to be \textit{independent or stable} if $e(i, j)\notin E$ for all $i, j\in S.$ Denote by  $\mathcal{I}(\mathcal{G})$  the set of all 
\textit{independent subsets} of $\mathcal G$. Note that we have the empty set $\emptyset $ and $\{i\}$ are elements of $\mathcal{I}(\mathcal{G})$ for all $i\in I.$
    \item 
The \textit{multi-variate independence polynomial} of $\mathcal{G}$ over $\mathbb{Q}$ is defined to be the polynomial
$$ I(\mathcal{G}, \mathbf x) = \sum\limits_{S\in \mathcal{I}(\mathcal{G})} \left(\prod_{i\in S} x_i\right).$$
\end{enumerate}
\end{defn}

\subsection{Generalized Chromatic Polynomials}\label{multichormaticdef}

 Let $\mathbf{m} = (m_i : i \in I)$ be a tuple of non-negative integers.
\begin{enumerate}
\item
We call a map\ $\tau^{\mathbf{m}}_{\mathcal{G}} : I \to P\left(\{1, \ldots, q\}\right)$ a $\textit{proper vertex multi-coloring}$ of $\mathcal{G}$ associated to $\mathbf{m}$ using at most $q$-colors if the following conditions are satisfied:
\begin{enumerate}
    \item for all $i \in I$, we have $|\tau^{\mathbf{m}}_{\mathcal{G}}(i)| = m_i$;
    \item for all $i, j \in I$ such that $e(i, j) \in E$, we have $\tau^{\mathbf{m}}_{\mathcal{G}}(i) \cap \tau^{\mathbf{m}}_{\mathcal{G}}(j) = \emptyset$.
\end{enumerate}

\medskip
\item
We say that two given ${}_{1}\tau^{\mathbf{m}}_{\mathcal{G}}$ and ${}_{2}\tau^{\mathbf{m}}_{\mathcal{G}}$  proper vertex multi-coloring of $\mathcal{G}$ associated to $\mathbf{m}$ using at most $q$-colors are \textit{distinct} if ${}_{1}\tau^{\mathbf{m}}_{\mathcal{G}}(i)\neq {}_{2}\tau^{\mathbf{m}}_{\mathcal{G}}(i)$ for some $i\in I.$

\medskip
\item
For a given $q\in \mathbb{N}$,
the number of proper vertex multi-coloring of $\mathcal{G}$ associated to $\mathbf{m}$ using at most $q$ colors is well-known to be a polynomial in $q$, called the $\textit{generalized chromatic}$ $\textit{polynomial}$ of $\mathcal{G}$ associated to $\mathbf{m}$, and it is denoted as
$\pi_{\mathcal{G}}^{\mathbf{m}}(q)$. 
\end{enumerate}
The generalized chromatic polynomial has the following well-known description.
As before, we take $\mathbf{m} = (m_i : i \in I)$ as a tuple of non-negative integers with finite support.
We denote by $P_k(\mathbf{m}, \mathcal{G})$ the set of all ordered $k$-tuples $(P_1, \ldots, P_k)$ such that:
\begin{enumerate}
    \item each $P_i$ is a non-empty independent subset of $I$, i.e., no two vertices have an edge between them; and
    \item the disjoint union of $P_1, \ldots, P_k$ is equal to the multiset $\{ i, \ldots, i : i \in \mathrm{supp}(\mathbf{m}) \}$, where $i$ appears exactly $m_i$ number of times for each $i\in \mathrm{supp}(\mathbf{m})$.
\end{enumerate}
The following is well-known; see \cite[Section 3.3]{ADV2018}.

\begin{prop}
For each $q\in \mathbb{N}$, we have
\[
\pi_{\mathcal{G}}^{\mathbf{m}}(q) = \sum_{k \geq 0} |P_k(\mathbf{m}, \mathcal{G})| \binom{q}{k}.
\] 
In particular, $\pi_{\mathcal{G}}^{\mathbf{m}}(q)$ depends only on $\mathbf{m}$ and $\mathcal{G}$ and does not depend on $q$ and it is a polynomial in $q$.  \qed
\end{prop}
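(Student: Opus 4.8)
The plan is to prove the identity $\pi_{\mathcal{G}}^{\mathbf{m}}(q) = \sum_{k\ge 0}|P_k(\mathbf{m},\mathcal{G})|\binom{q}{k}$ by a direct counting argument, partitioning proper multi-colorings according to the set of colors actually used. First I would observe that the sum on the right is finite: if $|\mathbf{m}| = N$, then any $k$-tuple $(P_1,\dots,P_k)$ of non-empty sets whose disjoint union is the $N$-element multiset associated to $\mathbf{m}$ has $k \le N$, so $|P_k(\mathbf{m},\mathcal{G})| = 0$ for $k > N$. This also shows the right-hand side is a polynomial in $q$ of degree at most $N$ (a $\mathbb{Z}$-linear combination of the binomial polynomials $\binom{q}{k}$), which will give the ``in particular'' clause once the main identity is established.

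The core bijective step: fix $q\in\mathbb{N}$ and count proper vertex multi-colorings $\tau = \tau^{\mathbf{m}}_{\mathcal{G}}: I \to P(\{1,\dots,q\})$. Given such a $\tau$, let $C = \{c_1 < c_2 < \cdots < c_k\} \subseteq \{1,\dots,q\}$ be the set of colors that appear in some $\tau(i)$, and for each used color $c_r$ set $P_r := \{i \in I : c_r \in \tau(i)\}$. I would check that (i) each $P_r$ is non-empty by construction, (ii) each $P_r$ is an independent set because condition (b) of a proper multi-coloring forbids adjacent vertices from sharing a color, and (iii) counting incidences of the form ``vertex $i$ receives color $c_r$'' in two ways shows that the disjoint union of the $P_r$'s is exactly the multiset $\{i,\dots,i : i\in\mathrm{supp}(\mathbf{m})\}$ with multiplicity $m_i$, since $|\tau(i)| = m_i$. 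Thus $(P_1,\dots,P_k) \in P_k(\mathbf{m},\mathcal{G})$. Conversely, given a choice of $k$-element color set $C\subseteq\{1,\dots,q\}$ (there are $\binom{q}{k}$ of these) together with an element $(P_1,\dots,P_k)\in P_k(\mathbf{m},\mathcal{G})$, define $\tau(i) := \{c_r : i \in P_r\}$; one verifies this is a proper multi-coloring associated to $\mathbf{m}$ using the set of colors $C$, and the two constructions are mutually inverse. Summing over $k$ and over the $\binom{q}{k}$ choices of $C$ yields the claimed formula.

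The one point requiring a little care — and the step I expect to be the mild obstacle — is the bookkeeping in the converse direction, namely that the map $i\mapsto\{c_r : i\in P_r\}$ has the right fiber sizes: one must use that the $P_r$ are pairwise disjoint \emph{as a multiset decomposition}, so that a vertex $i$ with multiplicity $m_i$ lies in exactly $m_i$ of the $P_r$ (each occurrence of $i$ in the multiset sitting in a distinct part), giving $|\tau(i)| = m_i$; and that a vertex $i\notin\mathrm{supp}(\mathbf{m})$ lies in none of them, so $\tau(i)=\emptyset$, consistent with $m_i = 0$. Properness is then immediate from the independence of each $P_r$. Once this correspondence is spelled out, the identity $\pi_{\mathcal{G}}^{\mathbf{m}}(q) = \sum_{k\ge 0}|P_k(\mathbf{m},\mathcal{G})|\binom{q}{k}$ holds for all $q\in\mathbb{N}$, and since both sides are then equal on infinitely many integers while the right-hand side is visibly a polynomial, $\pi_{\mathcal{G}}^{\mathbf{m}}$ is a polynomial in $q$ as asserted.
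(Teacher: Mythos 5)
Your argument is correct: partitioning proper multi-colorings by the set of colors actually used, and pairing a $k$-element color set with an ordered tuple $(P_1,\dots,P_k)\in P_k(\mathbf{m},\mathcal{G})$ via $P_r=\{i: c_r\in\tau(i)\}$, is exactly the standard counting proof, which the paper omits (it cites the result as well known from \cite{ADV2018}). Your bookkeeping of the multiset union condition, including vertices outside $\mathrm{supp}(\mathbf{m})$, is handled correctly, so nothing is missing.
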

\noindent
\begin{rem}
If $m_i = 1$ for all $i\in I$, then the polynomial $\pi_{\mathcal{G}}^{\mathbf{m}}(q)$ is the ordinary chromatic polynomial of $\mathcal{G}$ which is well-studied in the literature (see \cite{Birkhoff, Dong}). 
The definition of the generalized chromatic polynomial depends on the positive integer $q$; however, it
makes sense to consider the evaluation at any integer as it is a polynomial in $q$. A famous result of Stanley says that the evaluation at $q=-1$ of the chromatic polynomial counts the number of acyclic orientations of the graph $\mathcal{G}$ (see \cite{Stanley} for more details). 
\end{rem}

\subsection{}\label{clanconnection}
We have a close relationship between ordinary chromatic polynomials and multi-colored chromatic polynomials. To see this connection, 
we need the following definition of join of $\mathcal{G}$. 
Let  $\mathbf{m} = (m_i : i\in I)\in \mathbb{Z}_{\ge 0}^I$. The graph
$\mathcal{G}(\mathbf{m})-$the \textit{join of $\mathcal{G}$} with respect to $\mathbf{m}$ is defined as follows: 
the vertices of $\mathcal{G}(\mathbf{m})$ are the disjoint union of $\{i^1, \ldots, i^{m_i}\}$, $i\in \mathrm{supp}(\mathbf{m})$ and the edges
of $\mathcal{G}(\mathbf{m})$ are $e(i^r, i^s)$ for all $1\le r\neq s\le m_i$ and
$e(i^r, j^s)$ for $e(i, j)\in E(\mathcal{G})$ and $1\le r \le m_i$, $1\le s \le m_j$.
That is, for each vertex $i\in \mathrm{supp}(\mathbf{m})$, we take a clique (or complete subgraph) of size $m_i$ and 
join the all the vertices of $i$th clique and $j$th clique if there is an edge between the vertices $i$ and $j$ in the original graph $\mathcal{G}.$
The join graph has many different names in the literature, for example Read used clan graph in his paper. 
The following is well-known (see for example \cite[Remark 3.3]{ADV2018})
\begin{prop}\label{keyprop}
For $\mathbf{m} = (m_i : i\in I)\in \mathbb{Z}_{\ge 0}^I$, we have
$$\pi_{\mathcal{G}}^{\mathbf{m}}(q) =  \frac{\pi_{\mathcal{G}(\mathbf{m})}(q)}{\prod\limits_{i=1}^n m_i!},$$
where $\pi_{\mathcal{G}(\mathbf{m})}(q)$ is the ordinary chromatic polynomial of $\mathcal{G}(\mathbf{m}).$\qed
\end{prop}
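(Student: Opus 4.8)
The plan is to exhibit, for each fixed $q\in\mathbb{N}$, an explicit surjection from the set of ordinary proper $q$-colorings of the join graph $\mathcal{G}(\mathbf{m})$ onto the set of proper vertex multi-colorings of $\mathcal{G}$ associated to $\mathbf{m}$ using at most $q$ colors, all of whose fibers have the same size $\prod_i m_i!$; this yields the numerical identity $\pi_{\mathcal{G}(\mathbf{m})}(q) = \big(\prod_i m_i!\big)\,\pi_{\mathcal{G}}^{\mathbf{m}}(q)$ for every $q\in\mathbb{N}$, which then upgrades to an identity of polynomials in $q$ because both sides are polynomials.

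First I would define the map. Given a proper coloring $c\colon V(\mathcal{G}(\mathbf{m}))\to\{1,\ldots,q\}$, set $\tau(i)=\{c(i^1),\ldots,c(i^{m_i})\}$ for $i\in\mathrm{supp}(\mathbf{m})$ and $\tau(i)=\emptyset$ otherwise. Since $i^1,\ldots,i^{m_i}$ form a clique in $\mathcal{G}(\mathbf{m})$, the values $c(i^1),\ldots,c(i^{m_i})$ are pairwise distinct, so $|\tau(i)|=m_i$, giving condition (a) in the definition of a proper vertex multi-coloring. If $e(i,j)\in E(\mathcal{G})$, then by construction every $i^r$ is adjacent to every $j^s$ in $\mathcal{G}(\mathbf{m})$, so $\tau(i)\cap\tau(j)=\emptyset$, giving condition (b). Hence $\tau$ is a well-defined proper vertex multi-coloring of $\mathcal{G}$ associated to $\mathbf{m}$, and $c\mapsto\tau$ is the desired map.

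Next I would count the fibers. Fix a proper vertex multi-coloring $\tau$ of $\mathcal{G}$ associated to $\mathbf{m}$. A coloring $c$ of $\mathcal{G}(\mathbf{m})$ lies over $\tau$ exactly when, for each $i\in\mathrm{supp}(\mathbf{m})$, the assignment $r\mapsto c(i^r)$ is a bijection from $\{1,\ldots,m_i\}$ onto the $m_i$-element set $\tau(i)$; there are precisely $m_i!$ such bijections. Conversely, any independent combination of such choices over $i\in\mathrm{supp}(\mathbf{m})$ produces a coloring $c$ that is automatically proper: the distinctness within each clique follows from $r\mapsto c(i^r)$ being injective, and the disjointness between adjacent cliques follows from condition (b) for $\tau$. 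Thus the fiber over $\tau$ has cardinality $\prod_{i\in\mathrm{supp}(\mathbf{m})}m_i!=\prod_{i\in I}m_i!$ (the remaining factors being $0!=1$), independent of $\tau$. This establishes the numerical identity for all $q\in\mathbb{N}$, and since $\mathcal{G}(\mathbf{m})$ is a finite graph (as $\mathrm{supp}(\mathbf{m})$ and each $m_i$ are finite) its ordinary chromatic polynomial is a genuine polynomial in $q$, as is $\pi_{\mathcal{G}}^{\mathbf{m}}(q)$; an equality of polynomials holding at all positive integers holds identically, which is the claim.

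There is no real obstacle in this argument; the only points that require attention are the well-definedness of the map---that is, verifying conditions (a) and (b) for the induced $\tau$, which is precisely where the clique blow-up structure of $\mathcal{G}(\mathbf{m})$ is used---and the routine final step of passing from a numerical identity valid on $\mathbb{N}$ to an identity of polynomials.
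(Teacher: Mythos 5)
Your argument is correct: the map $c\mapsto\tau$ is well defined precisely because of the clique blow-up structure, each fiber has size $\prod_i m_i!$ by the bijection count, and the passage from a numerical identity on $\mathbb{N}$ to a polynomial identity is legitimate since both sides are polynomials in $q$. The paper itself offers no proof of this proposition, citing it as well known (Remark 3.3 of the reference \cite{ADV2018}), and what you have written is exactly the standard fiber-counting argument that the citation refers to, so your proposal fills in the omitted details faithfully.
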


\subsection{}
Using the first counting principles, we can efficiently compute the generalized chromatic polynomials for many interesting families of graphs.
For example, for the star graph $S_n$, we have \[
\pi_{S_n}^{\mathbf{m}}(q) = \binom{q}{m_1}\binom{q-m_1}{m_2}\cdots \binom{q-m_{1}}{m_n}. 
\]
This can be computed as follows: label the vertices of the star graph as in the figure \ref{stargraph}.
Let us say $q$ colors are given to color the vertices of $S_n$. As we do not have any restrictions in coloring the first vertex $1$, we can choose any $m_1$ colors from the
given $q$ colors and color the vertex $1.$ To color the vertex $2$, we can not use any colors that are already used to color the vertex $1$, and this is the only restriction, so we can choose any $m_2$ colors from the remaining $q-m_1$ colors and color the vertex $2$. Similarly, we color all the remaining vertices as they are adjacent to only one vertex, namely vertex $1.$
Now, since both $\pi_{S_n}^{\mathbf{m}}(q)$ and $\binom{q}{m_1}\binom{q-m_1}{m_2}\cdots \binom{q-m_{1}}{m_n}$ are polynomials in $q$ and they coincide for all natural numbers $q$, we must have the equality as polynomials, in other words, \[
\pi_{S_n}^{\mathbf{m}}(q) = \binom{q}{m_1}\binom{q-m_1}{m_2}\cdots \binom{q-m_{1}}{m_n}. 
\] holds for $q\in \mathbb{C}.$
\begin{figure}[h]
\centering
\begin{subfigure}{0.49\textwidth}
\centering
\begin{tikzpicture}
        \tikzstyle{B}=[circle,draw=black!80,fill=black!80,thick]
        \node[B] (1) at (0,0) [label=below:$n-2$]{};
        \node[B] (2) at (1.5,0) [label=below:$4$]{};
        \node[B] (3) at (-0.8,1.5) [label=left:$n-1$]{};
        \node[B] (4) at (2.3,1.5) [label=right:$3$]{};
        \node[B] (5) at (1.5,3) [label=right:$2$]{};
        \node[B] (6) at (0.75,1.5) [label=below:$1$]{};
        \node[B] (7) at (0,3) [label=left:$n$]{};
        \path[-] (1) edge node[left]{} (6);
        \path[-] (2) edge node[left]{} (6);
        \path[-] (3) edge node[left]{} (6);
        \path[-] (4) edge node[left]{} (6);
        \path[-] (5) edge node[left]{} (6);
        \path[-] (7) edge node[left]{} (6);
        \path (1) -- node[auto=false]{\ldots} (2);
    \end{tikzpicture}
    \caption{Star graph $S_n$}
    \label{stargraph}
\end{subfigure}
\caption{}
\end{figure}

\noindent
We have the following result for cycle graphs due to Ronald C. Read. Set $m_{n+1}=m_1$  and $v_k(q)=\binom{q}{k}-\binom{q}{k-1}$ and $(q)_k=q(q-1)(q-2)\cdots(q-k+1)$, then we have
\begin{thm}(\cite[Theorem 3]{Read})\label{cyclegraphread}
For the cycle graph $C_n$, $n\ge 3$, we have the following formula
\[  \pi_{C_n}^{\mathbf{m}}(q)= \frac{1}{\prod_{i=1}^n m_i!}  \left(\prod_{i=1}^{n}(q)_{(m_i+m_{i+1})}\right)\times
    \left(\sum\limits_{k=0}^n\Big((-1)^{kn}v_k(q)\prod_{i=1}^n\frac{(m_i)_{(k)}}{(q)_{({m_i+k})}}\Big)\right)
\]\qed
\end{thm}

\subsection{Perfect Elimination Ordering and Chordal graphs}
 
A graph $\mathcal{G}$ is called \textit{chordal} if it has no induced subgraph isomorphic to the cycle graph $C_m$ with $m \geq 4$.   
Computing the chromatic polynomials for finite chordal graphs is easy as they have perfect elimination ordering; see \cite{Gavril}. We now recall the definition of 
perfect elimination ordering of vertices. First, denote by $N_\mathcal{G}[k]$ the set of all neighbors of $k$ in $\mathcal{G}$ including $k$, i.e.,
$$N_\mathcal{G}[k]=\{ r\in I\backslash \{k\} : e(r, k)\in E\}\cup \{k\}.$$
\begin{defn} Now we define the perfect elimination ordering and PEO graphs. 
\begin{enumerate}
    \item The ordering on $I$ of the vertices of $\mathcal{G}$ is called 
a \textit{perfect elimination ordering} on $I$ if it is a well-ordering of $I$ and 
for each $k\in I$ the subgraph $\mathcal{G}_k$ of $\mathcal{G}$ induced by the set of vertices 
$N_\mathcal{G}[k]\cap \{r\in I : r\le k\}$ is a clique, i.e., a complete subgraph of $\mathcal{G}$. 

\medskip
\item A graph $\mathcal{G}$ is said to be \textit{PEO--graph} if it has an ordering of its vertices that is a perfect elimination ordering.

\end{enumerate}
\end{defn}
\noindent
We identify $I$ with $\{1, \ldots, n\}$ if its cardinality is $n$. We assume that this ordering of the vertices is a perfect elimination ordering if $\mathcal{G}$ is a PEO--graph. 
The following result is well-known for finite simple graphs \cite[Theorem 4.1]{Golumbic}, see also \cite{Fulkerson}.
\begin{prop}
A finite simple graph $\mathcal{G}$ is chordal if and only if it has a perfect elimination ordering. \qed
\end{prop}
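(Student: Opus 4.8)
The statement to prove is the classical theorem (Fulkerson--Gross, Dirac): a finite simple graph is chordal if and only if it admits a perfect elimination ordering. Since the paper has already set up all the terminology, I would give the standard two-direction argument, with the substantive work concentrated in the ``chordal $\Rightarrow$ PEO'' direction.

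For the easy direction (PEO $\Rightarrow$ chordal), suppose $\mathcal{G}$ has a perfect elimination ordering and, towards a contradiction, that it contains an induced cycle $C = (v_1, v_2, \ldots, v_k)$ with $k \geq 4$. Let $v$ be the largest vertex of $C$ in the PEO; then $v$ has two neighbours on the cycle, say $u$ and $w$, both of which precede $v$ in the ordering. By the definition of PEO, the earlier neighbours of $v$ induce a clique, so $u$ and $w$ must be adjacent. But $u$ and $w$ are at distance $2$ along an induced cycle of length $\geq 4$, hence non-adjacent --- contradiction. Therefore $\mathcal{G}$ has no induced $C_k$ with $k \geq 4$, i.e., $\mathcal{G}$ is chordal.

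For the hard direction (chordal $\Rightarrow$ PEO), I would argue by induction on the number of vertices $n$, using Dirac's lemma that every chordal graph on $\geq 2$ vertices has a \emph{simplicial vertex} (a vertex whose neighbourhood is a clique). The base case $n \leq 1$ is trivial. Given a chordal $\mathcal{G}$ on $n$ vertices, pick a simplicial vertex $v$; then $\mathcal{G} \setminus v$ is again chordal (deleting a vertex cannot create an induced cycle), so by induction it has a perfect elimination ordering $w_1 < w_2 < \cdots < w_{n-1}$. Appending $v$ as the largest element yields an ordering of $\mathcal{G}$: for each $w_i$ the set of earlier neighbours is unchanged and still induces a clique, and for $v$ the set of earlier neighbours is all of $N_{\mathcal{G}}(v)$, which is a clique because $v$ is simplicial. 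Hence this is a perfect elimination ordering of $\mathcal{G}$.

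The main obstacle is proving Dirac's lemma that a chordal graph has a simplicial vertex. I would prove the slightly stronger statement that a chordal graph that is not complete has two non-adjacent simplicial vertices, by induction on $n$. Take two non-adjacent vertices $a, b$ and a minimal $a$--$b$ separator $S$; let $A$ and $B$ be the components of $\mathcal{G} \setminus S$ containing $a$ and $b$ respectively. The key claim is that $S$ is a clique: if $x, y \in S$ were non-adjacent, then by minimality of $S$ each of $x, y$ has a neighbour in $A$ and in $B$, so one can choose shortest paths through $A$ and through $B$ joining $x$ to $y$ and splice them into an induced cycle of length $\geq 4$, contradicting chordality. Now the induced subgraph $\mathcal{G}(A \cup S)$ is chordal on fewer vertices; if it is complete, any vertex of $A$ is simplicial in $\mathcal{G}$, and otherwise induction gives two non-adjacent simplicial vertices of $\mathcal{G}(A \cup S)$, at least one of which lies in $A$ (since $S$ is a clique) and is therefore simplicial in $\mathcal{G}$ as well (its neighbourhood stays inside $A \cup S$). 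Symmetrically one gets a simplicial vertex in $B$, and these two are non-adjacent since they lie in different components of $\mathcal{G} \setminus S$. This completes the lemma and hence the theorem. (Alternatively, one could invoke the equivalence with the existence of a perfect elimination ordering produced by Lex-BFS or Maximum Cardinality Search, but the separator argument is the most self-contained.)
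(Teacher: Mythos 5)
Your argument is correct, but it is worth noting that the paper itself does not prove this proposition at all: it is stated as a known result with citations to Golumbic and to Fulkerson--Gross, and the \qed simply defers to the literature. What you supply is the classical self-contained proof: the easy direction via the largest cycle vertex in the ordering (its two earlier cycle-neighbours would have to be adjacent, contradicting inducedness of $C_k$, $k\ge 4$), and the hard direction by induction on $|I|$ using Dirac's lemma that a non-complete chordal graph has two non-adjacent simplicial vertices, proved via a minimal separator $S$ being a clique. Two small remarks. First, you correctly adapted the construction to the paper's convention, in which the \emph{earlier} neighbours of each vertex must form a clique, so the simplicial vertex is appended as the \emph{largest} element rather than eliminated first as in Golumbic's formulation; this is exactly right and easy to get backwards. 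Second, in the separator argument you should spell out why the spliced cycle is induced: chords inside each of the two paths are excluded because the paths are chosen shortest (or with minimal interior) within $A\cup\{x,y\}$ and $B\cup\{x,y\}$ respectively, chords between the two interiors are excluded because $A$ and $B$ lie in different components of $\mathcal{G}\setminus S$, and the chord $xy$ is excluded by assumption; with that sentence added the proof is complete. Your route buys self-containedness, while the paper's citation keeps its exposition focused on the hypergeometric characterization, where this equivalence is only an input.
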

    
\medskip

\subsection{}We have explicit formulas for the generalized chromatic polynomials of graphs that have perfect elimination ordering.\begin{thm}\label{poegraph}
Let $\mathcal{G}$ be a $PEO-$graph with a vertex set $I$. Let $I = \{1, \dots, n\}$ be a perfect elimination ordering on $I.$
For $\mathbf{m}\ge 0$ and $q\ge 1$, we have
 \[
\pi_{\mathcal{G}}^{\mathbf{m}}(-q) =(-1)^{|\bold m|}\prod\limits_{r=1}^{n} \binom{q-1+a_r(\mathbf{m})}{m_{r}}
\]
where $a_r(\mathbf{m})=\sum\limits_{\substack{1\le s \le r\\ s\in N_\mathcal{G}[r]}} m_{s}$
for $1\le r \le n$. \end{thm}

\begin{pf}
Since $m_i = 0$ if $i\notin \mathrm{supp}(\mathbf{m})$, we can restrict our attention to the induced subgraph $\mathcal{G}(I_N)$,
where $I_N= \mathrm{supp}(\mathbf{m})$. Write $ \mathrm{supp}(\mathbf{m}) = \{i_1<\cdots < i_N\}$,
note that this induced ordering is also a perfect elimination ordering for $\mathcal{G}(I_N)$. 
If we start coloring the vertices of $\mathcal{G}(I_N)$, the first vertex $i_1$ has no restrictions, so we can choose $m_{i_1}$ colors from the given $q$ colors and color it. 
By induction, let us say we have colored the first $r-1$ vertices, $r\ge 2$. 
For $2\le r<N$, the subgraph spanned by $N_{\mathcal{G}(I_N)}[i_r]\cap \{i_1, \dots, i_r\}$ is a clique. So, the only restriction that we have to color the vertex $i_r$ is that
the colors that are used to color the vertices  $N_{\mathcal{G}(I_N)}[i_r]\cap \{i_1, \dots, i_{r-1}\}$ should not be used to color the vertex $i_r$.
With this restriction, we can choose 
any $m_{i_r}$ colors from the remaining $q-a_r(\mathbf{m})+m_{i_r}$ colors and color the vertex $i_r.$ Thus, for any $q\in \mathbb{N}$, we have 
\begin{equation}\label{equproof}
    \pi_{\mathcal{G}}^{\mathbf{m}}(q) =\prod\limits_{r=1}^{N} \binom{q-a_r(\mathbf{m})+m_{i_r}}{m_{i_r}}
\end{equation}
Since both sides of the equation \ref{equproof} are polynomials in $q$, and they agree for all positive integers $q$, both polynomials must coincide for all $q\in \mathbb{C}.$ In particular, we have \[
    \pi_{\mathcal{G}}^{\mathbf{m}}(-q) =\prod\limits_{r=1}^{N} \binom{-q-a_r(\mathbf{m})+m_{i_r}}{m_{i_r}}=\prod\limits_{r=1}^{N} (-1)^{m_{i_r}}\binom{q+a_r(\mathbf{m})-1}{m_{i_r}}
\] for any $q\ge 1.$
This implies that \[
    \pi_{\mathcal{G}}^{\mathbf{m}}(-q) = (-1)^{|\bold m|}\prod\limits_{r=1}^{N} \binom{q-1+a_r(\mathbf{m})}{m_{i_r}}
\] for all $q\ge 1.$ This completes the proof as $m_i = 0$ if $i\notin \mathrm{supp}(\mathbf{m})$.
\end{pf}
\noindent
We see some examples now.\medskip
\begin{enumerate}\label{keyexamplechordal}
    \item The ordering of the vertices of the line graph $L_n$ given in Figure \ref{L_n} is a perfect elimination ordering. Thus we have
 \[
\pi_{L_n}^{\mathbf{m}}(q) = \binom{q}{m_1}\binom{q-m_1}{m_2}\cdots \binom{q-m_{n-1}}{m_n}, \, \text{and}
\] 
\[
\pi_{L_n}^{\mathbf{m}}(-q) =(-1)^{|\mathbf{m}|} \binom{q+m_1-1}{m_1}\binom{q+m_1+m_2-1}{m_2}\cdots \binom{q+m_{n-1}+m_n-1}{m_n}.
\]

\medskip
\item Recall that the complete graph $K_n$ is a graph on vertices $\{1, \ldots, n\}$ such that any two distinct vertices $i, j$ 
has an edge between them. For example, the complete graph on $4$ vertices is given below:
\begin{figure}[h]
\centering
\begin{subfigure}{0.49\textwidth}
\centering
    \begin{tikzpicture}
    \node (1) at (0,0) [circle, fill=black,thick, label=left:1] {};
    \node (2) at (4,0) [circle, fill=black, thick, label=right:2] {};
    \node (3) at (2,3.732) [circle, fill=black, thick, label=above:3] {};
    \node (4) at (2,1) [circle, fill=black, thick, label=below:4] {};
    \path[-] (1) edge node[left]{} (2);
    \path[-] (1) edge node[left]{} (3);
    \path[-] (1) edge node[left]{} (4);
    \path[-] (2) edge node[left]{} (3);
    \path[-] (2) edge node[left]{} (4);
    \path[-] (3) edge node[left]{} (4);
\end{tikzpicture}
\caption{Complete graph $K_4$}
    \label{K4}
\end{subfigure}
\caption{}
\end{figure}

\medskip
\noindent
The ordering of the vertices $\{1,\ldots, n\}$ of $K_n$ is a perfect elimination ordering. So, we get
\[
\pi_{K_n}^{\mathbf{m}}(-q) =(-1)^{|\mathbf{m}|} \binom{q+m_1-1}{m_1}\binom{q+m_1+m_2-1}{m_2}\cdots \binom{q+m_1+\cdots +m_{n-1}+m_n-1}{m_n}.
\]
\end{enumerate}

\medskip
\medskip
\subsection{}\label{1implies2}
Now we record the following significant result from \cite[Propostion 3.7]{ADV2018} that connects the $q$--th power of the multi-variate independence polynomial of $\mathcal{G}$ and the generalized chromatic polynomials of $\mathcal{G}.$ We give proof of this fact here for the reader's convenience. 
 We set $\mathbf{x}^{\mathbf{m}}=\prod\limits_{i\in I}x_i^{m_i}$ for $\mathbf{m}\in \mathbb{Z}^I_+$ 
 and $\mathbf{x}^S=\prod\limits_{i\in S}x_i$ for $S\subseteq I$.
 For $f\in R$, we denote by $f[\mathbf{x}^{\mathbf{m}}]$  the coefficient of $\mathbf{x}^{\mathbf{m}}$ in $f.$
 
\begin{thm}(\cite[Propostion 3.7]{ADV2018})\label{keyresult}
Let $\mathcal{G}$ be a finite simple graph. For $q\in \mathbb{Z}$, we have the following identity holds in $R$
\[ I(\mathcal{G}, \mathbf{x})^{q} = \sum\limits_{\mathbf{m}\ge 0}\pi_{\mathcal{G}}^{\mathbf{m}}(q) \mathbf{x}^{\mathbf{m}}.
\]
\end{thm}
\begin{pf}

We expand 
$I(\mathcal{G}, \mathbf{x})^{q} $ using the binomial expansion. For any $f\in R$ with constant term one, we have
 $$f^q=\sum\limits_{k\ge 0}{q \choose k}\, (f-1)^k.$$
 Take $f= I(\mathcal{G}, \mathbf{x})$ and note that 
 $$f-1 = \sum_{ S \in \mathcal{I}(\mathcal{G}) \backslash \{\emptyset\} } \mathbf{x}^S.$$
 For $\mathbf{m}\in \mathbb{Z}_{\ge 0}^I$, the coefficient
  $(f-1)^k[\mathbf{x}^{\mathbf{m}}]$ is given by
$$\sum\limits_{(S_1,\dots,S_k)}1$$
where the sum ranges over all $k$--tuples $(S_1,\dots,S_k)$, with some possible repetitions, such that
\medskip
\begin{enumerate}
    \item  $S_i \in \mathcal{I}(\mathcal{G})\backslash \{\emptyset\}$ for each $1\le i\le k$, and
    \medskip
    \item the disjoint union $S_1\dot{\cup} \cdots \dot{\cup}S_k = \dot{\bigcup}_{i\in \mathrm{supp}(\mathbf{m})} \{ i, \ldots, i\}$, where $i$ appears exactly $m_i$ number of times for each $i\in  \mathrm{supp}(\mathbf{m})$.
\end{enumerate}

\medskip
\noindent
It follows that $\big(S_1.\dots, S_k)\in P_k\big(\mathbf m, \mathcal{G}\big)$ and each element is obtained in this way. 
So, the sum ranges over all elements in $P_k\big(\mathbf m, \mathcal{G}).$
Hence we have $$(f-1)^k[\mathbf{x}^{\mathbf{m}}] = |P_k\big(\mathbf m, \mathcal{G})|.$$ This implies that
\[ I(\mathcal{G}, \mathbf{x})^{q}[\mathbf{x}^{\mathbf{m}}]  = \sum\limits_{k\ge 0}{q \choose k}\, (f-1)^k[\mathbf{x}^{\mathbf{m}}] =  \sum_{k \geq 0} \binom{q}{k} |P_k(\mathbf{m}, \mathcal{G})|  =  \pi_{\mathcal{G}}^{\mathbf{m}}(q).
\]
\end{pf}
\noindent
We quickly recover the results \cite[Corollary 3.2 \& 6.2]{RadchenkoVilleas2021}, and they proved the following result for chordal graphs by entirely different techniques.
\begin{cor}\label{keyresultpoegraph}
Let $\mathcal{G}$ be a chordal graph with the vertex set $I$ and let $I=\{1, \ldots, n\}$ be a perfect elimination ordering. 
Then, for any $q\ge 1$,  we have
 \[  I(\mathcal{G}, \mathbf{x})^{-q} = \sum\limits_{\mathbf{m}\ge 0} (-1)^{|\mathbf{m}|}\prod\limits_{r=1}^{n} \binom{q-1+a_r(\mathbf{m})}{m_{r}} \mathbf{x}^{\mathbf{m}}.
\]
where $a_r(\mathbf{m})=\sum\limits_{\substack{1\le s \le r\\ s\in N_\mathcal{G}[r]}} m_{s}$
for $1\le r \le n$. In particular, the series $ I(\mathcal{G}, \mathbf{x})^{-q}$ is a Horn hypergeometric series for $q\ge 1.$
\end{cor}
\begin{proof}
Note that $a_r(\mathbf{m})\ge m_r$ for all $r\in I$. Write $ I(\mathcal{G}, \mathbf{x})^{-q} = \sum_{\mathbf{m}\ge 0}c_{\mathbf{m}} \bold x^{\mathbf{m}}$ and
$c_{\mathbf{m}, r} = \binom{q-1+a_r(\mathbf{m})}{m_{r}}$
We have
$$c_{\mathbf{m}, r} = \frac{(q+a_r(\mathbf{m})-1)\cdots (q+a_r(\mathbf{m})-m_r)}{m_r!}.$$ 
For $1\le i\le n$, we have $c_{\mathbf{m+e_i}}=(-1)^{|\mathbf{m+e_i}|}\prod_{r=1}^n c_{\mathbf{m+e_i},r}.$ Now note that we have
    $$c_{\mathbf{{m+e_i}}, r}=\begin{cases}
        \binom{q+a_r(\mathbf{m})}{m_r} & \text{ if } i<r\text{ and } (i,r)\in E \\
\noalign{\vskip9pt}
        \binom{q+a_i(\mathbf{m})}{m_i+1} & \text{ if } i=r\\
        c_{\mathbf{{m}}, r} & \text{ else.} 
    \end{cases}$$
and
    $$\frac{c_{\mathbf{{m+e_i}},r }}{c_{\mathbf{{m}}, r}} =\begin{cases}
       \frac{q+a_r(\mathbf{m})}{q+a_r(\mathbf{m})-m_r} & \text{ if } i<r\text{ and } (i,r)\in E \\ 
       \noalign{\vskip9pt}
       \frac{q+a_i(\mathbf{m})}{m_i+1} & \text{ if } i=r  \\  \end{cases}$$
    Therefore $$\frac{C_{\mathbf{m+e_i}}}{C_{\mathbf{m}}}=(-1)\left(\frac{q+a_i(\mathbf{m})}{m_i+1}\right)\left(\prod\limits_{\substack{r=i+1\\ (i,r)\in E}}^n \frac{q+a_r(\mathbf{m})}{q+a_r(\mathbf{m})-m_r}\right),$$ which is clearly
     a rational function in the variables $(m_j : j\in I)$ and this completes the proof.
\end{proof}
\medskip
\begin{cor}(\cite[Theorem 3]{Read})\label{keyresultcycle}
For the cycle graph $C_n$, $n\ge 3$ and $q\ge 1$ we have the following formula
 \[
 I(C_n, \mathbf{x})^{-q}[\mathbf{x}^{\mathbf{m}}] =  \pi_{C_n}^{\mathbf{m}}(-q)= \frac{1}{\prod_{i=1}^n m_i!}  \left(\prod_{i=1}^{n}(-q)_{(m_i+m_{i+1})}\right)\times
    \left(\sum\limits_{k=0}^n\Big((-1)^{kn}v_k(-q)\prod_{i=1}^n\frac{(m_i)_{(k)}}{(-q)_{({m_i+k})}}\Big)\right).
\]  In particular, we have the following formula
for $q=1$ and $\mathbf{a} = (a,\dots,a)$
$$I(C_n, \mathbf{x})^{-1}[\mathbf{x}^\mathbf{a}] = (-1)^{na}\left(\sum_{|k|\leq a}(-1)^k\binom{2a}{a+k}^n\right).$$
\end{cor}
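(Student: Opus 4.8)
\emph{General formula.} The plan is to read the general identity off results already in hand and then extract the $q=1$ formula by a short spectral computation. The first equality, $I(C_n,\mathbf x)^{-q}[\mathbf x^{\mathbf m}]=\pi_{C_n}^{\mathbf m}(-q)$, is Theorem~\ref{keyresult} specialized to $-q$ in place of $q$: the proof of Theorem~\ref{keyresult} goes through verbatim with $q$ kept as an indeterminate, because $f^{q}=\sum_{k}\binom{q}{k}(f-1)^{k}$ is a finite sum in each $\mathbf x$-degree, so it is an identity of polynomials in $q$ that one may specialize. For the second equality, invoke Read's formula for $\pi_{C_n}^{\mathbf m}$ recorded in Example~\ref{keyexample}(6)---equivalently, by Proposition~\ref{keyprop}, $\pi_{C_n(\mathbf m)}(q)/\prod_i m_i!$, where $C_n(\mathbf m)$ is the join of $C_n$ with respect to $\mathbf m$---and substitute $-q$ for $q$, which is again legitimate since both sides are polynomials in $q$. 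So the only ingredient beyond the excerpt is Read's computation, which we cite.

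\emph{The case $q=1$, $\mathbf m=\mathbf a=(a,\dots,a)$.} Rather than substitute into Read's general formula I would compute $\pi_{C_n}^{\mathbf a}$ directly and evaluate at $q=-1$. A proper $q$-coloring of the join $C_n(\mathbf a)$ is the same datum as a cyclic sequence $(T_1,\dots,T_n)$ of $a$-subsets of $\{1,\dots,q\}$ with $T_r\cap T_{r+1}=\emptyset$ (indices read mod $n$) together with an ordering of the $a$ colors inside each of the $n$ cliques; hence $\pi_{C_n(\mathbf a)}(q)=(a!)^{n}\operatorname{tr}(A_{q,a}^{\,n})$, where $A_{q,a}$ is the adjacency matrix of the Kneser graph $K(q,a)$ (vertices the $a$-subsets of $\{1,\dots,q\}$, edges the disjoint pairs); here $a\ge 1$, so the possibility $T_r=T_{r+1}$ is automatically excluded. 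By Proposition~\ref{keyprop}, $\pi_{C_n}^{\mathbf a}(q)=\operatorname{tr}(A_{q,a}^{\,n})$. The spectrum of $K(q,a)$ is classical: its eigenvalues are $(-1)^{j}\binom{q-a-j}{a-j}$ with multiplicity $\binom{q}{j}-\binom{q}{j-1}$ for $0\le j\le a$, so $\pi_{C_n}^{\mathbf a}(q)=\sum_{j=0}^{a}\bigl(\binom{q}{j}-\binom{q}{j-1}\bigr)\bigl((-1)^{j}\binom{q-a-j}{a-j}\bigr)^{n}$ as an identity of polynomials in $q$. Now set $q=-1$: since $\binom{-1}{j}=(-1)^{j}$, the multiplicity is $1$ when $j=0$ and $2(-1)^{j}$ when $j\ge 1$, and since $\binom{-1-a-j}{a-j}=(-1)^{a-j}\binom{2a}{a-j}$, the $n$-th power of the eigenvalue equals $(-1)^{an}\binom{2a}{a-j}^{n}$. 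Folding the contributions of $j$ and $-j$ together using $\binom{2a}{a-j}=\binom{2a}{a+j}$ collapses the sum to $(-1)^{na}\sum_{|k|\le a}(-1)^{k}\binom{2a}{a+k}^{n}$, which by the general identity equals $I(C_n,\mathbf x)^{-1}[\mathbf x^{\mathbf a}]$. (Substituting $q=1$, $m_i=a$ directly into Read's formula yields the same answer, using $v_0(-1)=1$ and $v_k(-1)=2(-1)^{k}$ for $k\ge 1$ and tracking the collapse of the Pochhammer factors to central binomials; the spectral route is cleaner.)

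\emph{Where the work is.} There is no real conceptual difficulty---Theorem~\ref{keyresult} and Read's theorem carry the content, and the Kneser spectrum is standard---so the main obstacle is bookkeeping of two kinds. First, each identity used (Example~\ref{keyexample}(6), the spectral formula for $K(q,a)$) must be recognized as an identity of polynomials in $q$: the left sides are polynomials by the formula $\pi_{\mathcal G}^{\mathbf m}(q)=\sum_k|P_k(\mathbf m,\mathcal G)|\binom{q}{k}$, the right sides are manifestly polynomials, and they agree for all large $q$; only then is evaluation at $-q$ and at $q=-1$---values outside the range where the combinatorial interpretations apply---justified. Second, the signs $(-1)^{na}$, $(-1)^{m_r+m_{r+1}}$ and $(-1)^{a-j}$ must be tracked correctly through the passages between rising factorials, falling factorials, and binomial coefficients (e.g.\ via $\binom{-m}{k}=(-1)^{k}\binom{m+k-1}{k}$). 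These steps are routine but error-prone, and I would carry them out slowly.
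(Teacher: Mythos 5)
Your proposal is correct, and for the general formula it runs exactly as the paper does (Theorem~\ref{keyresult} to identify $I(C_n,\mathbf x)^{-q}[\mathbf x^{\mathbf m}]$ with $\pi_{C_n}^{\mathbf m}(-q)$, then Proposition~\ref{keyprop} plus Read's formula from Example~\ref{keyexample}(6), with the evaluation at $-q$ justified by polynomiality in $q$ --- a point the paper leaves implicit and you rightly make explicit). Where you genuinely diverge is the diagonal case $q=1$, $\mathbf a=(a,\dots,a)$: the paper substitutes $q=-1$, $m_i=a$ directly into Read's formula and simplifies the Pochhammer symbols, using $v_0(-1)=1$, $v_k(-1)=2(-1)^k$ and $(-1)_{a+k}=(-1)^{a+k}(a+k)!$, whereas you recompute $\pi_{C_n}^{\mathbf a}(q)$ from scratch as $\operatorname{tr}(A_{q,a}^n)$ for the Kneser graph $K(q,a)$ via Proposition~\ref{keyprop}, invoke its classical spectrum, and evaluate the resulting polynomial identity at $q=-1$. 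I checked your bookkeeping: the multiplicities at $q=-1$ are $1$ and $2(-1)^j$, the eigenvalue becomes $(-1)^a\binom{2a}{a-j}$, and folding $j\leftrightarrow -j$ gives $(-1)^{na}\sum_{|k|\le a}(-1)^k\binom{2a}{a+k}^n$, matching the paper's answer (e.g.\ both give $14$ for $C_4$, $a=1$, and $-30$ for $C_5$, $a=1$). Your route buys a derivation of the special formula that is independent of Read's theorem (which you then only need for the general-$\mathbf m$ statement), with the sum truncating at $k=a$ automatically rather than via vanishing Pochhammer factors; the paper's route is shorter given that Read's formula is already on the table, but is pure symbol-pushing. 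One caveat, not specific to you: the first equality $I(C_n,\mathbf x)^{-q}[\mathbf x^{\mathbf m}]=\pi_{C_n}^{\mathbf m}(-q)$ does not literally match Theorem~\ref{keyresult} as printed, which carries an extra factor $(-1)^{|\mathbf m|}$; this is a sign-convention inconsistency already present in the paper (its own proof of the corollary glosses over the same point), and your argument tracks the stated convention exactly as the paper does, so it is not a gap in your proposal.
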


\begin{proof}
The first formula follows from the Proposition \ref{keyprop} and Read's formula Theorem \ref{cyclegraphread}. 
    It remains to consider the case $q = -1$ and $\mathbf{a} = (a,\dots,a)$. Recall that $\binom{-1}{k} = (-1)^k$ and $v_k(-1) = \binom{-1}{k} - \binom{-1}{k-1} = 2 (-1)^k.$
    We have
    \begin{align*}
       (a!)^n I(C_n, \mathbf{x})^{-1}[\mathbf{x}^\mathbf{a}] &= ((-1)_{(2a)})^n\left(\sum_{k=0}^{n}(-1)^{kn} v_k(-1)\Big(\frac{(a)_{(k)}}{(-1)_{(a+k)}}\Big)^n\right) \\
        &=((-1)_{(2a)})^n\left(\sum_{k=1}^{n} 2\cdot(-1)^{k(n+1)}\Big(\frac{(a)_{(k)}}{(-1)_{(a+k)}}\Big)^n\right) + ((-1)_{(2a)})^n \Big(\frac{(a)_{(0)}}{(-1)_{(a)}}\Big)^n\\
 &= ((2a)!)^n\left(2\left(\sum_{k=1}^{n}(-1)^{na+k}\Big(\frac{a!}{(a-k)!(a+k)!}\Big)^n\right)+(-1)^{na}\Big(\frac{1}{a!}\Big)^n\right)\\
        &= \frac{(a!)^n}{(-1)^{na}}\left(2\sum_{k=1}^a(-1)^k\binom{2a}{a+k}^n + \binom{2a}{a}^n\right)\\\end{align*}
Here we use $(-1)_{(m)} = (-1)^m m!$.
This implies that \[
     I(C_n, \mathbf{x})^{-1}[\mathbf{x}^\mathbf{a}]  = (-1)^{na}\left(2\sum_{k=1}^a(-1)^k\binom{2a}{a+k}^n + \binom{2a}{a}^n\right),\]
    which is our required formula. 
\iffalse 
This implies
 \begin{align*}
  I(C_n, \mathbf{x})^{-1}[\mathbf{x}^\mathbf{a}] &= \frac{(a!)^n}{(-1)^{na}}\left(\sum_{|k|\leq a}(-1)^k\binom{2a}{a+k}^n\right)
    \end{align*}\fi
\end{proof}

\subsection{} 
Let $S\subseteq I$ be a subset of $I.$ Consider the following natural ring homomorphism $$\pi_S: \mathbb{Q}[[x_i : i\in I]]\to  \mathbb{Q}[[x_i : i\in S]]$$
given by $\pi_S(F(\bold x))=\sum\limits_{\substack{\mathbf{m}\ge 0\\ \mathrm{supp}(\mathbf{m})\subseteq S }} c_{\mathbf{m}} \bold x^{\mathbf{m}}$, where
 $F(\bold x) = \sum\limits_{\mathbf{m}\ge 0} c_{\mathbf{m}} \bold x^{\mathbf{m}}\in \mathbb{Q}[[x_i : i\in I]].$
That means $\pi_S$ is defined by the specializing  $x_i = 0$ for all $i\notin S.$ The following local property of Horn hypergeometric series is easy to prove.
\begin{prop}\label{localproperty}
Suppose $F(\bold x)\in \mathbb{Q}[[x_i : i\in I]]$ is a Horn hypergeometric series then $\pi_S(F(\bold x))\in \mathbb{Q}[[x_i : i\in S]]$ is 
Horn hypergeometric for all $S\subseteq I$. \qed
\end{prop}
It is easy to see that the graph operation taking induced subgraphs correspond to these specializations for multi-variate independence polynomials.  
More precisely, we have 
\begin{prop}\label{restind}
Let $\mathcal{G}$ be a finite simple graph with the vertex set $I$ and let $S\subseteq I$. Then we have
$\pi_S(I(\mathcal{G}, \bold x)) = I(\mathcal{G}(S), \bold x)$ and $\pi_S(I(\mathcal{G}, \bold x)^{-1}) = I(\mathcal{G}(S), \bold x)^{-1}$. 
\end{prop}
\begin{proof}
We have $\mathcal{I}(\mathcal{G}(S)) = \{ J\in \mathcal{I}(\mathcal{G}) : J\subseteq S\}$, now the result follows from the definition of multi-variate independence polynomials
and $\pi_S$ being ring homomorphism.
\end{proof}
\noindent
We get the following important result by combining the Propositions \ref{localproperty} and \ref{restind}. 
\begin{cor}\label{corhyper}
Let $\mathcal{G}$ be a finite simple graph with the vertex set $I$. 
Suppose $I(\mathcal{G}, \bold x)^{-1}$ is Horn hypergeometric then $I(\mathcal{G}(S), \bold x)^{-1}$ is Horn hypergeometric for all $S\subseteq I$. \qed
\end{cor}

\medskip
\subsection{ Proof of Theorem \ref{mainthmintro}}\label{converseproof}
Only the implication $(3) \implies (1)$ remains.  Assume that the power series $I(\mathcal{G}, \mathbf x)^{-1}$  is Horn hypergeometric, but
$\mathcal{G}$ is not a chordal graph. Then it has an induced subgraph $C_m$ for some $m\ge 4.$ This would imply that $I(C_m, \mathbf x)^{-1}$  is Horn hypergeometric by corollary \ref{corhyper}. Write  $I(C_m, \mathbf x)^{-1} = \sum\limits_{\mathbf{m}\ge 0} c_{\mathbf{m}} \bold x^{\mathbf{m}}$. Consider the  
 sum of (signed) diagonal terms $$H_m(t) = \sum\limits_{\mathbf{a}= (a,\ldots, a)} (-1)^{ma} c_{\bold a} t^{a}\in \mathbb{Q}[[t]].$$
 Then the series $H_m(t)$ is Horn hypergeometric since 
 $$\frac{c_{\bold a+ e_1+\cdots +e_m}}{c_{\bold a}} = 
 \frac{c_{\bold a + e_1}}{c_{\bold a}}\frac{c_{\bold a + e_1+e_2}}{c_{\bold a+e_1}}\cdots \frac{c_{\bold a + e_1+\cdots + e_{m}}}{c_{\bold a+e_1+\cdots+e_{m-1}}}.$$
 is a rational function in the variable $a$ as the terms on the right side of the product are rational functions in the variable $a$.
 But we know from Corollary \ref{keyresultcycle} that, $$(-1)^{ma} c_{\bold a} = \left(\sum_{|k|\leq a}(-1)^k\binom{2a}{a+k}^m\right) = : S(m, a).$$ Now the proof follows from the argument given in  \cite[Proposition 6.3]{RadchenkoVilleas2021}. We repeat it here for the reader's convenience.
The numbers $S(m, a)$ are known as de Bruijn numbers in the literature; see \cite[Page 72, Section 4.7]{Bruijn}. 
The asymptotic behavior of $S(m, a)$ for fixed $m$ and large $a$ has been computed in \cite{Bruijn}:
	$$\frac{S(m, a+1)}{S(m, a)}\rightarrow (2\cos(\pi/2m))^{2m}, \qquad a \rightarrow\infty.$$
Since $H_m(t)$ is Horn
hypergeometric, for a fixed $m$, we can write the ratio $\frac{S(m, a+1)}{S(m, a)} = \frac{p(a)}{q(a)}$ where $p(x), q(x) \in \mathbb{Q}[x]$ are polynomials over $\mathbb{Q}.$
Write $p(x) = b_0+b_1x+\cdots +b_rx^r$ and $q(x) = c_0+c_1x+\cdots +c_sx^s$ with $b_r\neq 0$ and $c_s\neq 0$. Then we have 
$$\frac{S(m, a+1)}{S(m, a)} = a^{r-s}\left(\frac{b_0/a^r+\cdots +b_{r-1}/a +b_r}{c_0/a^s+\cdots +c_{s-1}/a +c_s}\right) \rightarrow (2\cos(\pi/2m))^{2m}, \qquad a \rightarrow\infty.$$
Hence we must have $r=s$ and  $(2\cos(\pi/2m))^{2m} = \frac{b_r}{c_r}\in \mathbb{Q}$, and 
hence $m\le 3$. This leads to a contradiction. Thus, our assumption that $\mathcal{G}$ is not a chordal graph is wrong, and this completes the proof. 

\medskip\medskip
\noindent\textbf{Acknowledgment.} We thank Bernd Sing from the Department of Mathematics, University of the West Indies, Cave Hill, Barbados, for providing a copy of Ronald C. Read's article. I.H. was partially supported by DST/INSPIRE/03/2019/000172.

\medskip
\noindent\textbf{Data availability.} No data is associated.

\medskip\medskip
\noindent\textbf{Declarations.}

\medskip
\noindent\textbf{Conflict of Interest.} On behalf of all authors, the corresponding author states there is no conflict of interest.

\end{document}